\documentclass{amsart}
\usepackage{latexsym,amssymb,amsmath,amsthm,amscd,graphicx,ascmac,}
\usepackage{longtable}
\usepackage{enumerate}
\usepackage{color}

\numberwithin{equation}{section}

\theoremstyle{plain}
\newtheorem{thm}{Theorem}[section]
\newtheorem*{thm*}{Theorem}

\theoremstyle{definition}

\theoremstyle{remark}

\newtheorem*{xrem}{Remark}

\newcommand{\cD}{{\mathcal D}}

\newcommand{\cF}{{\mathcal F}}
\newcommand{\cG}{{\mathcal G}}

\newcommand{\cM}{{\mathcal M}}

\newcommand{\cQ}{{\mathcal Q}}

\newcommand{\cS}{{\mathcal S}}

\newcommand{\kI}{{\mathfrak I}}

\newcommand{\kM}{{\mathfrak M}}

\newcommand{\N}{{\mathbb N}}

\newcommand{\R}{{\mathbb R}}

\newcommand{\Z}{{\mathbb Z}}

\def\al{\alpha}
\def\bt{\beta}

\def\tht{\theta}

\def\0{\emptyset}
\def\1{{\bf 1}}
\def\6{\partial}
\def\8{\infty}

\def\lt{\left}
\def\rt{\right}

\def\ds{\displaystyle}

\newcommand{\iii}[1]{{\left\vert\kern-0.25ex\left\vert\kern-0.25ex\left\vert #1 
    \right\vert\kern-0.25ex\right\vert\kern-0.25ex\right\vert}}

\def\Xint#1{\mathchoice
 {\XXint\displaystyle\textstyle{#1}}%
 {\XXint\textstyle\scriptstyle{#1}}%
 {\XXint\scriptstyle\scriptscriptstyle{#1}}%
 {\XXint\scriptscriptstyle\scriptscriptstyle{#1}}%
 \!\int}
 \def\XXint#1#2#3{{\setbox0=\hbox{$#1{#2#3}{\int}$}
 \vcenter{\hbox{$#2#3$}}\kern-.5\wd0}}

\def\fint{\Xint-}

\begin{document}

\title{The Kerman-Sawyer trace theorem for product Morrey spaces}

\author[N.~Hatano]{Naoya Hatano}
\address{
Graduate School of Information Science and Technology, The University of Osaka, 1-5, Yamadaoka, Suita-shi, Osaka 565-0871, Japan
}
\email{n.hatano.chuo@gmail.com}

\author[R.~Kawasumi]{Ryota Kawasumi}
\address{
Center for Mathematics and Data Science, Gunma University, 
4-2 Aramaki-machi, Maebashi City, Gunma 371-8510, Japan 
}
\email{r-kawasumi@gunma-u.ac.jp}

\author[H.~Saito]{Hiroki Saito}
\address{
College of Science and Technology, Nihon University,
Narashinodai 7-24-1, Funabashi City, Chiba, 274-8501, Japan
}
\email{saitou.hiroki@nihon-u.ac.jp}

\author[H.~Tanaka]{Hitoshi Tanaka}
\address{
Research and Support Center on Higher Education for the hearing and Visually Impaired, 
National University Corporation Tsukuba University of Technology,
Kasuga 4-12-7, Tsukuba City, Ibaraki, 305-8521 Japan
}
\email{htanaka@k.tsukuba-tech.ac.jp}

\thanks{}

\subjclass[2010]{Primary 42B25; Secondary 26D10, 42B35.}

\keywords{
Adams trace inequality;
Kerman-Sawyer trace inequality;
multilinear fractional integral operators;
parallel corona decomposition;
product Morrey spaces.
}

\date{}

\begin{abstract}
By using parallel corona decomposition,
the Kerman-Sawyer trace theorem is extended
from Lebesgue spaces to \textit{product Morrey spaces}.
By discretizing the multilinear fractional integral operator based on dyadic analysis, the framework of \textit{product Morrey spaces} naturally arises 
in the course of estimating the operator.
Within this natural setting, 
by establishing Sawyer-type testing estimates 
(to the setting of measures), 
we obtain an extension of 
the Kerman-Sawyer trace theorem.
The classical approach to the 
Kerman-Sawyer trace theorem typically relies on 
a reduction to Carleson's embedding theorem.
In contrast, 
in this paper we employ a parallel corona decomposition,
which allows us to overcome the difficulties inherent in the multilinear setting
and to provide a transparent and streamlined proof.
By incorporating recent developments in the theory of weights, 
this work clarifies the relationship between trace inequalities and Morrey spaces and 
contributes to a deeper understanding of these topics.
\end{abstract}

\maketitle

\section{Introduction}\label{sec1}
In this paper we investigate the Kerman-Sawyer trace theorem for product Morrey spaces.
Let $\R^n$ be the classical $n$-dimensional Euclidean space. 
The \textit{fractional integral operator}
$I_{\al}$, $0<\al<n$, 
is defined by
\[
I_{\al}f(x)
:=
\int_{\R^n}\frac{f(y)}{|x-y|^{n-\al}}\,{\rm d}y,
\quad x\in\R^n,
\]
and the \textit{fractional maximal operator} 
$M_{\al}$, $0\le\al<n$, 
is defined by 
\[
M_{\al}f(x)
:=
\sup_{x\in Q\in\cQ}
\ell_{Q}^{\al-n}
\int_{Q}|f|\,{\rm d}y,
\quad x\in\R^n.
\]
(Some notation and definitions are listed in the last of this section.)
When $\al=0$,
we simply write $M_0=M$ 
which is the classical Hardy-Littlewood maximal operator.

Let $0<p\le p_0<\8$. 
For an $L^p$ locally integrable function $f$ on $\R^n$, 
we set
\[
\|f\|_{\cM^{p,\,p_0}}
:=
\sup_{Q\in\cQ}
|Q|^{1/p_0}
\lt(\fint_{Q}|f|^p\,{\rm d}x\rt)^{1/p}.
\]
We will call the (classical) \textit{Morrey space} 
$\cM^{p,\,p_0}(\R^n)=\cM^{p,\,p_0}$ 
the subset of all $L^p$ locally integrable functions $f$ on $\R^n$ for which 
$\|f\|_{\cM^{p,\,p_0}}$ 
is finite. 
Applying H\"{o}lder's inequality,
we see that
\[
\|f\|_{\cM^{p_1,\,p_0}}
\ge
\|f\|_{\cM^{p_2,\,p_0}}
\quad\text{for all}\quad
p_0\ge p_1\ge p_2>0.
\]
This tells us that
\[
L^{p_0}=\cM^{p_0,\,p_0}
\subset\cM^{p_1,\,p_0}
\subset\cM^{p_2,\,p_0}
\quad\text{for all}\quad
p_0\ge p_1\ge p_2>0.
\]
Morrey spaces, 
introduced by C.~Morrey to study regularity questions
arising in the Calculus of Variations, 
describe local regularity more precisely than Lebesgue spaces 
and are widely used not only in harmonic analysis 
but also in partial differential equations 
(cf.~\cite{GT}).

The (well-known) Hardy-Littlewood-Sobolev theorem asserts that 
the Lebesgue space norm inequality
\[
\|I_{\al}f\|_{L^q(\R^n)}
\lesssim
\|f\|_{L^p(\R^n)}
\]
holds when 
$1<p<\frac{n}{\al}$ 
and $q=\frac{n}{n-\al p}p$.

In \cite{CF}, 
Chiarenza and Frasca showed that 
the Morrey space norm inequality
\[
\|I_{\al}f\|_{\cM^{q,\,q_0}(\R^n)}
\lesssim
\|f\|_{\cM^{p,\,p_0}(\R^n)}
\]
holds when 
$1<p\le p_0<\frac{n}{\al}$, 
$q=\frac{n}{n-\al p_0}p$ 
and 
$q_0=\frac{n}{n-\al p_0}p_0$.

Suppose that $\mu$ is a~nonnegative Radon measure on $\R^n$. 
Given any $\bt \in (0, n]$, define
\[
\|\mu\|_{\bt}
:=
\sup_{Q\in\cQ}
\frac{\mu(Q)}{\ell_{Q}^{\bt}}.
\]
Applying the Marcinkiewicz interpolation method, 
Adams in \cite{Ad} proved that 
the trace inequality 
\[
\|I_{\al}f\|_{L^q({\rm d}\mu)}
\lesssim
\|\mu\|_{n-\bt p}^{1/q}
\|f\|_{L^p({\rm d}x)}
\]
holds when
$1<p<\frac{n}{\al}$,
$0\le\bt<\al<n$ and 
$q=\frac{n-\bt p}{n-\al p}p$.

The defficulties arise in the diagonal case 
$\bt=\al$ and $q=p$.
One is now led to consider, 
for a~nonnegative Radon measure $\mu$ on $\R^n$, 
the trace inequality
\begin{equation}\label{1.1}
\|I_{\al}f\|_{L^p({\rm d}\mu)}
\lesssim A_0
\|f\|_{L^p({\rm d}x)},
\quad 1<p<\frac{n}{\al}.
\end{equation}
This inequality was studied by many authors 
(see \cite[Introduction]{KeSa}) 
and Kerman and Sawyer established the following.

\begin{quote}
The trace inequality \eqref{1.1} holds 
if and only if 
\begin{equation}\label{1.2}
A_0
=
\sup_{Q\in\cQ}
\lt(
\frac{(M_{\al}[\mu\1_{Q}])^{p'}(Q)}{\mu(Q)}
\rt)^{1/p'}<\8.
\end{equation}
\end{quote}

We show that 
\begin{equation}\label{1.3}
\|\mu\|_{n-\al p}^{1/p}
=
\sup_{Q\in\cQ}
\ell_{Q}^{\al}
\lt(\frac{\mu(Q)}{|Q|}\rt)^{1/p}
\le
\sup_{Q\in\cQ}
\lt(
\frac{(M_{\al}[\mu\1_{Q}])^{p'}(Q)}{\mu(Q)}
\rt)^{1/p'}.
\end{equation}
Indeed, by using $1/p+1/p'=1$
\begin{align*}
\ell_{Q}^{\al}
\lt(\frac{\mu(Q)}{|Q|}\rt)^{1/p}
&=
\ell_{Q}^{\al}
\lt(\frac{\mu(Q)}{|Q|}\rt)^{1/p}
\lt(\frac{\mu(Q)}{|Q|}\rt)^{1/p'}
\lt(\frac{|Q|}{\mu(Q)}\rt)^{1/p'}
\\ &=
\ell_{Q}^{\al}
\lt(\frac{\mu(Q)}{|Q|}\rt)
\lt(\frac{|Q|}{\mu(Q)}\rt)^{1/p'}
=
\lt[\lt(
\ell_{Q}^{\al}
\lt(\frac{\mu(Q)}{|Q|}\rt)
\rt)^{p'}
\lt(\frac{|Q|}{\mu(Q)}\rt)
\rt]^{1/p'}
\\ &=
\lt(\frac
{|Q|\lt(\ell_{Q}^{\al}(\mu(Q)/|Q|)\rt)^{p'}}
{\mu(Q)}
\rt)^{1/p'}
\le
\lt(
\frac{(M_{\al}[\mu\1_{Q}])^{p'}(Q)}{\mu(Q)}
\rt)^{1/p'},
\end{align*}
which yields our desired inequality \eqref{1.3}.
In \cite{SST}, 
by using an approximation of Cantor set,
it is verified that 
the reverse inequality of \eqref{1.3} can not hold.

The purpose of this paper is to extend this Kerman-Sawyer trace theorem 
to product Morrey spaces.
By discretizing the multilinear fractional integral operator based on dyadic analysis, 
the framework of product Morrey spaces naturally arises 
in the course of estimating the operator.

Let $m\in\N$. 
For any vector-valued function
\[
\vec{f}
:=
(f_1,\ldots,f_m),
\]
with each $f_i$ being a locally integrable function in $\R^n$, 
and for any $x\in\R^n$, 
the \textit{$m$-linear fractional integral operator}
$\kI_{\al}$, $0<\al<mn$, 
is defined by
\[
\kI_\al(\vec{f})(x)
:=
\int_{(\R^n)^m}
\frac
{\prod_{i=1}^mf_i(y_i)}
{(|x-y_1|+|x-y_2|+\cdots+|x-y_m|)^{mn-\al}}
\,{\rm d}y_1{\rm d}y_2\cdots{\rm d}y_m.
\]
Given a~vector-valued exponent 
$\vec{P}:=(p_1,\ldots,p_m)$ 
with 
$p_1,\ldots,p_m\in(1, \8)$, let 
$1/p=1/p_1+\cdots+1/p_m$ and 
let $0<p\le p_0<\8$.
The \textit{product Morrey space} 
$\cM^{\vec{P},\,p_0}((\R^n)^m)$
is defined as the set of all 
$\vec{f}=(f_1,\ldots,f_m)$ 
satisfying
\begin{align*}
\|\vec{f}\|_{\cM^{\vec{P},\,p_0}((\R^n)^m)}
&:=
\sup_{Q\in\cQ}
|Q|^{1/p_0}
\prod_{i=1}^m
\lt(\fint_{Q}|f_i|^{p_i}\,{\rm d}y\rt)^{1/p_i}
\\ &=
\sup_{Q\in\cQ}
|Q|^{1/p_0-1/p}
\prod_{i=1}^m
\lt(\int_{Q}|f_i|^{p_i}\,{\rm d}y\rt)^{1/p_i}
<\8.
\end{align*}
Let $\mu$ be a~nonnegative Radon measure on $\R^n$. 
For $0<q\le q_0<\8$,
the \textit{Radon-Morrey space} 
$\cM_{\mu}^{q,\,q_0}(\R^n)$
consists of all $\mu$-measurable functions $f$ on $\R^n$ 
such that
\[
\|f\|_{\cM_{\mu}^{q,\,q_0}(\R^n)}
:=
\sup_{Q\in\cQ}
|Q|^{1/q_0-1/q}
\lt(\int_{Q}|f|^q\,{\rm d}\mu\rt)^{1/q}
<\8.
\]

In \cite{HKST}, 
the following theorems were verified 
(Theorems \ref{thm1.1} and \ref{thm1.2}).

\begin{thm}{\rm(\cite[Theorem 1.1]{HKST})}\label{thm1.1}
Let $\mu$ be a nonnegative Radon measure on $\R^n$.
Let 
$\vec{P}:=(p_1,\ldots,p_m)$ with 
$p_1,\ldots,p_m\in(1, \8)$ and 
$1/p=1/p_1+\cdots+1/p_m$.
Assume that 
$1<p\le p_0<\frac{n}{\al}$ and 
$0\le\bt<\al<n$. Set 
$\tht=\frac{n-\bt p_0}{n-\al p_0}$,
$q=\tht p$ and $q_0=\tht p_0$.
Then we have
\[
\|\kI_\al(\vec{f})\|_{\cM_{\mu}^{q,\,q_0}(\R^n)}
\lesssim
\|\mu\|_{n-\bt p}^{1/q}
\|\vec{f}\|_{\cM^{\vec{P},\,p_0}((\R^n)^m)}.
\]
\end{thm}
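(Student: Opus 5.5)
The plan is to reduce to a dyadic model operator, localize to a fixed cube, and combine a Hedberg-type pointwise estimate with an Adams-type trace bound for the multilinear maximal operator. Throughout, write $\langle g\rangle_Q:=\fint_Q g\,{\rm d}y$.

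\emph{Step 1: dyadic reduction.} By the standard one-third trick, there are finitely many dyadic grids $\cD^1,\ldots,\cD^{3^n}$ such that
\[
\kI_\al(\vec f)(x)\lesssim\sum_{j}\sum_{Q\in\cD^j}\ell_Q^{\al}\prod_{i=1}^m\langle|f_i|\rangle_Q\,\1_Q(x).
\]
It therefore suffices to prove the estimate for one such dyadic operator $T$. Fix a cube $Q_0$; enlarging it by a bounded factor, we may take $Q_0$ dyadic. We split $T\vec f$ on $Q_0$ into two parts:
\begin{itemize}
\item cubes $Q\supsetneq Q_0$;
\item cubes $Q\subset Q_0$, which only see $\vec f\,\1_{Q_0}$.
\end{itemize}

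\emph{Step 2: large cubes.} Hölder's inequality gives $\prod_i\langle|f_i|\rangle_Q\le\prod_i\langle|f_i|^{p_i}\rangle_Q^{1/p_i}\le|Q|^{-1/p_0}\|\vec f\|_{\cM^{\vec P,p_0}}$. Since $\al p_0<n$, the geometric series yields
\[
\sum_{Q\supsetneq Q_0}\ell_Q^{\al}\prod_i\langle|f_i|\rangle_Q\lesssim\ell_{Q_0}^{\al-n/p_0}\|\vec f\|.
\]
Integrating over $Q_0$ against $\mu$ and using $\mu(Q_0)\le\|\mu\|_{n-\bt p}\ell_{Q_0}^{n-\bt p}$, the total power of $\ell_{Q_0}$ after multiplying by $|Q_0|^{1/q_0-1/q}$ is
\[
\frac n{q_0}-\frac nq+\al-\frac n{p_0}+\frac{n-\bt p}{q}
=\frac{n-\bt p_0}{\tht p_0}-\frac{n-\al p_0}{p_0}=0,
\]
by the definition of $\tht$. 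So this part is acceptable.

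\emph{Step 3: small cubes, Hedberg splitting.} For $x\in Q_0$ and a parameter $r\le\ell_{Q_0}$, split the cubes $Q\subset Q_0$ with $x\in Q$ into those with $\ell_Q\le r$ and those with $\ell_Q>r$. The first group is at most $r^\al\,\kM(\vec f\1_{Q_0})(x)$, where $\kM(\vec g):=\sup_{Q\ni x}\prod_i\langle|g_i|\rangle_Q$. The second group is at most $r^{\al-n/p_0}\|\vec f\|$, exactly as in Step 2. Optimizing in $r$ gives
\[
T(\vec f\1_{Q_0})(x)\lesssim\kM(\vec f\1_{Q_0})(x)^{1-\al p_0/n}\,\|\vec f\|^{\al p_0/n}.
\]
The power of $\kM$ appearing in $\int_{Q_0}(\cdot)^q\,{\rm d}\mu$ is $s:=q(1-\al p_0/n)=p(n-\bt p_0)/n$. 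If the optimal $r$ exceeds $\ell_{Q_0}$, only the first group occurs with $r=\ell_{Q_0}$, and that case is handled by Step 2 again.

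\emph{Step 4: trace bound for $\kM$ (the main obstacle).} What remains is an Adams-type estimate of the form
\[
\Bigl(\int_{Q_0}\kM(\vec f\1_{Q_0})^{s}\,{\rm d}\mu\Bigr)^{1/s}
\lesssim\|\mu\|_{n-\bt p}^{1/s}\,|Q_0|^{\gamma}\prod_i\Bigl(\int_{Q_0}|f_i|^{p_i}\Bigr)^{1/p_i},
\]
with $\gamma$ chosen so that, after using $\prod_i(\int_{Q_0}|f_i|^{p_i})^{1/p_i}\le|Q_0|^{1/p-1/p_0}\|\vec f\|$, all powers of $\ell_{Q_0}$ cancel. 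To prove it, first pass to dyadic $\kM$ on $Q_0$. For the level set $\{\kM>\la\}$, take its maximal cubes $Q_k$; each satisfies
\[
\la<\prod_i\langle|f_i|\rangle_{Q_k}\le\prod_i\langle|f_i|^{p_i}\rangle_{Q_k}^{1/p_i}.
\]
Then
\[
\mu(\{\kM>\la\})\le\|\mu\|\sum_k|Q_k|^{d/n},\qquad d=n-\bt p\le n,
\]
and concavity gives $\sum_k|Q_k|^{d/n}\le(\sum_k|Q_k|)^{d/n}$.

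The delicate point is bounding $\sum_k|Q_k|$ by a product of $L^{p_i}$ masses. I would try two routes.
\begin{itemize}
\item Apply Hölder for sums to $|Q_k|\le\la^{-p}\prod_i(\int_{Q_k}|f_i|^{p_i})^{p/p_i}$ to get $\sum_k|Q_k|\le\la^{-p}\prod_i(\int_{Q_0}|f_i|^{p_i})^{p/p_i}$. This yields a weak-type bound at exponent $pd/n$.
\item Upgrade to strong type by splitting $f_i$ at height proportional to $\la$, Marcinkiewicz style, or via a slightly smaller auxiliary exponent $\tilde p<p$ together with the Morrey condition. This works because $s=p(n-\bt p_0)/n$ and $pd/n=p(n-\bt p)/n$ differ: $s>pd/n$ since $p\le p_0$, and the slack has to be absorbed by the Morrey norm rather than a pure $L^{p_i}$ norm.
\end{itemize}
I expect this exponent bookkeeping, reconciling $\bt p$ in the measure dimension with $\bt p_0$ in $s$, to be the genuine difficulty. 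If the direct route fails, I would instead run the Hedberg splitting with the Morrey bound $|Q|^{-1/\tilde p_0}$ coming from the smaller exponents $\tilde p_i=p_i\tilde p/p$. Here $\tilde p$ is chosen so that the resulting maximal trace exponent equals $\tilde p\,d/n$ exactly, which makes the level-set covering argument close without loss.
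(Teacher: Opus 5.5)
The paper only quotes this theorem from the authors' earlier article and gives no proof of it, so I assess your argument on its own. Steps 1 and 2 are correct, including the exponent count in Step 2. Steps 3--4, however, cannot be completed when $\bt>0$. After your Hedberg bound, what remains to be proved is
\[
\int_{Q_0}\kM(\vec f\1_{Q_0})^{s}\,{\rm d}\mu\lesssim\|\mu\|_{n-\bt p}\,|Q_0|^{1-p/p_0}\,\|\vec f\|_{\cM^{\vec P,\,p_0}}^{s},\qquad s=\frac{p(n-\bt p_0)}{n},
\]
and this is false; so is the stronger version with $L^{p_i}(Q_0)$ norms stated in your Step 4.

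Here is a counterexample. Take $Q_0=[0,1)^n$, $d=n-\bt p<n$ and $\delta=N^{-1/d}\ll N^{-1/n}$. Let $E\subset Q_0$ be the union of $N$ cubes of side $\delta$ centred at the points of a grid of mesh $N^{-1/n}$, and put $f_1=\cdots=f_m=\1_E$ and ${\rm d}\mu=\delta^{d-n}\1_E\,{\rm d}x$. Checking cubes of side $\le\delta$, between $\delta$ and $N^{-1/n}$, and $\ge N^{-1/n}$ gives $\|\mu\|_{n-\bt p}\lesssim1$. The same check gives $\|\vec f\|_{\cM^{\vec P,\,p_0}}\lesssim\max(\delta^{n/p_0},(N\delta^n)^{1/p})=N^{-\bt/d}$, using $\bt<n/p_0$. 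Since $\kM(\vec f\1_{Q_0})\ge1$ a.e.\ on $E$, the left side is at least $\mu(E)=N\delta^d=1$, while the right side is $\lesssim N^{-\bt s/d}\to0$.

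In fact every Lebesgue or Morrey norm of $\1_E$ tends to $0$ along this family. Hence no trace bound for the order-zero maximal function in terms of $\|\mu\|_{n-\bt p}$ can close the argument, and your fallback with $\tilde p$ fails for the same reason. Two steps in your Step 4 are written backwards. For $d/n<1$, concavity gives $\sum_k|Q_k|^{d/n}\ge(\sum_k|Q_k|)^{d/n}$. Also $s\le pd/n$, not $s>pd/n$, because $p\le p_0$. The loss itself happens in Step 3. On $E$ the dyadic operator is only of size about $\delta^{\al}+(N\delta^n)^m$, whereas your Hedberg bound there is about $\|\vec f\|^{\al p_0/n}$, which is far larger.

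The missing idea is to put the order $\bt$ into the maximal function. Set $\kM_\bt(\vec g)(x):=\sup_{Q\ni x}\ell_Q^{\bt}\prod_i\langle|g_i|\rangle_Q$. The cubes $Q\subset Q_0$ with $\ell_Q\le r$ then contribute at most $r^{\al-\bt}\kM_\bt(\vec f\1_{Q_0})(x)$; this is where $\bt<\al$ is used. Balancing against $r^{\al-n/p_0}\|\vec f\|$ gives
\[
T(\vec f\1_{Q_0})\lesssim\kM_\bt(\vec f\1_{Q_0})^{1/\tht}\,\|\vec f\|_{\cM^{\vec P,\,p_0}}^{1-1/\tht},\qquad \tht=\frac{n-\bt p_0}{n-\al p_0},
\]
which is exactly where $\tht$ comes from. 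Since $q/\tht=p$, the trace step becomes the diagonal estimate $\int_{Q_0}\kM_\bt(\vec f\1_{Q_0})^{p}\,{\rm d}\mu\lesssim\|\mu\|_{n-\bt p}\prod_i\|f_i\1_{Q_0}\|_{L^{p_i}}^{p}$. This one does follow from the growth condition alone:
\begin{itemize}
\item Linearize the dyadic $\kM_\bt$ over pairwise disjoint sets $E_Q\subset Q$, and set $a_Q=\ell_Q^{\bt p}\mu(E_Q)$. These coefficients satisfy $\sum_{Q\subset R}a_Q\le\ell_R^{\bt p}\mu(R)\le\|\mu\|_{n-\bt p}|R|$.
\item The Carleson embedding $\sum_Q a_Q\inf_Q G\le\|\mu\|_{n-\bt p}\int G$ applies with $G=\prod_i(M[f_i\1_{Q_0}])^{p}$.
\item H\"older's inequality and the $L^{p_i}$-boundedness of $M$ then finish the diagonal estimate.
\end{itemize}
Finally, $\prod_i\|f_i\1_{Q_0}\|_{L^{p_i}}\le|Q_0|^{1/p-1/p_0}\|\vec f\|$. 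The resulting factor $|Q_0|^{1-p/p_0}$ is exactly cancelled by $|Q_0|^{q(1/q_0-1/q)}$.
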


\begin{thm}{\rm(\cite[Theorem 1.2]{HKST})}\label{thm1.2}
Let $\mu$ be a~nonnegative Radon measure on $\R^n$. 
Let 
$\vec{P}:=(p_1,\ldots,p_m)$ with 
$p_1,\ldots,p_m\in(1, \8)$ and 
$1/p=1/p_1+\cdots+1/p_m$.
Assume that 
$0<p\le p_0<\frac{n}{\al}$, 
$0<p\le 1$ and 
$0<\bt\le\al<mn$. Set 
$\tht=\frac{n-\bt p_0}{n-\al p_0}$,
$q=\tht p$ and $q_0=\tht p_0$.
Then we have
\[
\|\kI_\al(\vec{f})\|_{\cM_{\mu}^{q,\,q_0}(\R^n)}
\lesssim
\|\mu\|_{n-\bt p}^{1/q}
\|\vec{f}\|_{\cM^{\vec{P},\,p_0}((\R^n)^m)}.
\]
\end{thm}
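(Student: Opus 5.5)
We would prove Theorem~\ref{thm1.2} by dyadic discretization followed by a stopping-time (principal cube) argument. The restriction $p\le 1$ should enter only through the subadditivity of $t\mapsto t^p$.

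\emph{Step 1: dyadic reduction and localization.} By the usual $3^n$ shifted dyadic grids $\cD^{t}$ we have
\[
\kI_\al(\vec f)(x)\lesssim \sum_{t}\sum_{Q\in\cD^t}\ell_Q^{\al}\,A_Q(\vec f)\,\1_Q(x),
\qquad A_Q(\vec f):=\prod_{i=1}^m\fint_Q|f_i|\,{\rm d}y,
\]
so it suffices to treat one grid $\cD$. Fix a cube $Q_0\in\cD$; we must show
\[
|Q_0|^{1/q_0-1/q}\Bigl(\int_{Q_0}\Bigl(\sum_{Q}\ell_Q^\al A_Q\1_Q\Bigr)^q{\rm d}\mu\Bigr)^{1/q}\lesssim \|\mu\|_{n-\bt p}^{1/q}\|\vec f\|_{\cM^{\vec P,p_0}}.
\]
Split the sum into $Q\supsetneq Q_0$ and $Q\subseteq Q_0$.

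\emph{Step 2: the large cubes.} By H\"older's inequality, $A_Q\le |Q|^{-1/p_0}\|\vec f\|_{\cM^{\vec P,p_0}}$. Hence the part with $Q\supsetneq Q_0$ is a constant on $Q_0$ bounded by $\sum_{Q\supsetneq Q_0}\ell_Q^{\al-n/p_0}\|\vec f\|\lesssim \ell_{Q_0}^{\al-n/p_0}\|\vec f\|$, a geometric series because $p_0<n/\al$. Its contribution is then controlled using $\mu(Q_0)\le\|\mu\|_{n-\bt p}\ell_{Q_0}^{n-\bt p}$. The exponent bookkeeping closes by the identity
\[
\tht\Bigl(\al-\frac n{p_0}\Bigr)-\bt+\frac n{p_0}=0,
\]
which follows from $\tht=\frac{n-\bt p_0}{n-\al p_0}$.

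\emph{Step 3: the small cubes.} We run principal cubes inside $Q_0$: start from $Q_0$, and declare the maximal $Q'\subset P$ with $A_{Q'}>2A_P$ to be the children of a principal cube $P$. This yields a sparse family $\cS$ with pairwise disjoint sets $E_P\subset P$, $|E_P|\ge\frac12|P|$, and $A_Q\le 2A_P$ for every $Q$ in the corona of $P$. The target is the bound
\[
\int_{Q_0}\Bigl(\sum_{Q\subseteq Q_0}\ell_Q^\al A_Q\1_Q\Bigr)^q{\rm d}\mu\lesssim\sum_{P\in\cS}(\ell_P^\al A_P)^q\mu(P).
\]
Granting it, we argue as follows.
\begin{itemize}
\item Use $\mu(P)\le\|\mu\|\ell_P^{n-\bt p}$.
\item Split $A_P^q=A_P^pA_P^{q-p}$ and estimate $A_P^{q-p}\le (|P|^{-1/p_0}\|\vec f\|)^{q-p}$.
\end{itemize}
By the identity of Step~2, all powers of $\ell_P$ cancel except $|P|$, leaving
\[
\|\mu\|\,\|\vec f\|^{q-p}\sum_{P\in\cS}|P|A_P^p\lesssim\|\mu\|\,\|\vec f\|^{q-p}\int_{Q_0}\prod_i (M[f_i\1_{Q_0}])^p\,{\rm d}x .
\]
Here we used sparseness and $A_P\le\prod_i\inf_{E_P}M[f_i\1_{Q_0}]$. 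H\"older's inequality with $\sum_i p/p_i=1$ and the $L^{p_i}$-boundedness of $M$ ($p_i>1$) bound the last integral by $\prod_i\|f_i\|_{L^{p_i}(Q_0)}^p\le |Q_0|^{1-p/p_0}\|\vec f\|^p$. This gives the claim after taking the $1/q$ power and multiplying by $|Q_0|^{1/q_0-1/q}$.

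\emph{Main obstacle.} The hard step is the displayed reduction to $\sum_P(\ell_P^\al A_P)^q\mu(P)$. First, within the corona of $P$ we have $\sum_{Q}\ell_Q^\al A_Q\1_Q\lesssim \ell_P^\al A_P\1_P$, since $\ell_Q^\al$ decays geometrically. If $q\le 1$, the reduction then follows at once from $(\sum a_j)^q\le\sum a_j^q$. When $q>1$, which can happen since $\tht>1$, this is not automatic: along a chain of principal cubes $A_P$ at least doubles while $\ell_P^\al$ halves. To handle this case we would first try a Hedberg-type pointwise interpolation. Using Step~2 for the tail, we would aim for
\[
\sum_{Q\ni x,\,Q\subseteq Q_0}\ell_Q^\al A_Q\lesssim (\mathcal M\vec f(x))^{1-\al p_0/n}\|\vec f\|^{\al p_0/n}
\]
for the dyadic multilinear maximal function $\mathcal M$. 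Raising this to the power $q$ reduces matters to $\int_{Q_0}(\mathcal M\vec f)^{s}{\rm d}\mu$ with $s=p(1-\bt p_0/n)\le p\le1$. Because $s\le1$, subadditivity now applies to the principal-cube decomposition of $\mathcal M\vec f$, and the computation of Step~3 goes through with $q$ replaced by $s$.
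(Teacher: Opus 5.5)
The present paper only quotes this theorem from \cite{HKST} and gives no proof of it, so your argument has to stand on its own. Steps 1 and 2 are correct, and so is Step 3 when $q\le 1$. There is one small slip: for $m\ge 2$ the threshold $A_{Q'}>2A_P$ does not give $|E_P|\ge\frac12|P|$. H\"older only yields $\sum_{P'}|P'|A_{P'}^{1/m}\le|P|A_P^{1/m}$, hence $|E_P|\ge(1-2^{-1/m})|P|$. Use the threshold $2^mA_P$ instead.

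The genuine gap is the case $q>1$, which does occur (e.g.\ $p=1$, $\bt<\al$), and there your Hedberg route fails. After the pointwise bound you must prove
\[
\int_{Q_0}(\mathcal M\vec f)^s\,{\rm d}\mu\lesssim\|\mu\|_{n-\bt p}\,\|\vec f\|_{\cM^{\vec P,\,p_0}}^{s}\,|Q_0|^{1-p/p_0},\qquad s=p\Bigl(1-\frac{\bt p_0}{n}\Bigr)<p.
\]
``Step 3 with $q$ replaced by $s$'' does not give this. The factor $\ell_P^{\al q}$ has disappeared, so you would have to write $A_P^s=A_P^pA_P^{s-p}$ with $s-p<0$. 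The Morrey bound $A_P\le|P|^{-1/p_0}\|\vec f\|$ then points the wrong way.

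In fact the display is false. Take $Q_0=[0,1)^n$, $r=2^{-k}$ and $L\approx r^{-\bt p/n}$. Let $E\subset Q_0$ be the union of the $\approx(Lr)^{-n}$ dyadic cubes of side $r$ on a lattice of mesh $Lr$. Put $f_1=\cdots=f_m=\1_E$ and ${\rm d}\mu=r^{-\bt p}\1_E\,{\rm d}x$. One checks:
\begin{itemize}
\item $\|\mu\|_{n-\bt p}\lesssim 1$;
\item $\|\vec f\|_{\cM^{\vec P,\,p_0}}\approx r^{\bt}$, using $\bt<n/p_0$;
\item $\mathcal M\vec f=1$ on $E$, and $\mu(E)\approx 1$.
\end{itemize}
So the left side is $\gtrsim 1$, while the right side is $\approx r^{\bt s}\to 0$. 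Hedberg's splitting replaces the local factor $\ell_Q^{\al}$ by the global Morrey norm, and that is too lossy here.

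The missing idea is to expand the $q$-th power along the chain of principal cubes containing $x$, using the Morrey norm only on ancestors. For $q\ge 1$ and $B_k=\sum_{j\le k}b_j$ one has $B_k^q-B_{k-1}^q\le q\,b_kB_k^{q-1}$. Take $b_P=\ell_P^{\al}A_P$, and bound the ancestor sum by a geometric series via your Step 2 estimate. This gives
\[
\Bigl(\sum_{P\in\cS,\,P\ni x}b_P\Bigr)^q\le q\sum_{P\in\cS,\,P\ni x}b_P\Bigl(\sum_{P'\in\cS,\,P'\supseteq P}b_{P'}\Bigr)^{q-1}\lesssim\|\vec f\|^{q-1}\sum_{P\in\cS,\,P\ni x}\ell_P^{\al+(\al-n/p_0)(q-1)}A_P .
\]
Now integrate in $\mu$ and use $\mu(P)\le\|\mu\|\,\ell_P^{n-\bt p}$. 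Because $p\le 1$, you may also use $A_P=A_P^pA_P^{1-p}\le A_P^p\,(|P|^{-1/p_0}\|\vec f\|)^{1-p}$. The power of $\ell_P$ becomes $n+q(\al-n/p_0)+p(n/p_0-\bt)=n$ by your identity. What remains is $\|\mu\|\,\|\vec f\|^{q-p}\sum_{P\in\cS}|P|A_P^p$, which you finish exactly as in Step 3. So the hypothesis $p\le 1$ enters through the factor $A_P^{1-p}$, not through subadditivity of $t\mapsto t^p$.
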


The following is our main theorem 
(Theorem \ref{thm1.3}), 
whose proofs are the focus of this paper.

\begin{thm}\label{thm1.3}
Let $\mu$ be a~nonnegative Radon measure on $\R^n$. 
Let 
$\vec{P}:=(p_1,\ldots,p_m)$ with 
$p_1,\ldots,p_m\in(1, \8)$ and 
$1/p=1/p_1+\cdots+1/p_m$.
Assume that 
$1<p\le p_0<\frac{n}{\al}$ 
and $0<\al<n$. Then, 
the inequality
\[
\|\kI_\al(\vec{f})\|_{\cM_{\mu}^{p,\,p_0}(\R^n)}
\lesssim A_0
\|\vec{f}\|_{\cM^{\vec{P},\,p_0}((\R^n)^m)}
\]
holds if 
\[
A_0
=
\sup_{Q\in\cQ}
\lt(
\frac{(M_{\al}[\mu\1_{Q}])^{p'}(Q)}{\mu(Q)}
\rt)^{1/p'}
<\8.
\]
\end{thm}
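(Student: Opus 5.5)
We would prove Theorem \ref{thm1.3} by first discretizing $\kI_\al$ and then running a parallel corona decomposition. By the standard $3^n$-dyadic-lattice trick, and since $\kI_\al(\vec f)\ge0$ when all $f_i\ge0$ (we may assume this), it suffices to bound the dyadic operator
\[
\kI_\al^{\cD}(\vec f)(x):=\sum_{Q\in\cD}\ell_Q^{\al}\prod_{i=1}^m\fint_Q f_i\,{\rm d}y\;\1_Q(x)
\]
for a fixed dyadic lattice $\cD$. The kernel satisfies $(|x-y_1|+\cdots+|x-y_m|)^{-(mn-\al)}\lesssim\sum_{Q\ni x,y_1,\dots,y_m}\ell_Q^{\al-mn}$ over the shifted lattices. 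Fix a cube $Q_0\in\cD$; the task is then to show
\[
|Q_0|^{1/p_0-1/p}\Big(\int_{Q_0}\kI^{\cD}_\al(\vec f)^p\,{\rm d}\mu\Big)^{1/p}\lesssim A_0\|\vec f\|_{\cM^{\vec P,p_0}}.
\]
We split the sum into cubes $Q\subset Q_0$ (local part) and $Q\supsetneq Q_0$ (tail). For the tail, on $Q_0$ the operator is a constant. We bound $\prod_i\fint_Q f_i\le\prod_i(\fint_Q f_i^{p_i})^{1/p_i}\le \ell_Q^{-n/p_0}\|\vec f\|$, sum the geometric series $\sum_{Q\supsetneq Q_0}\ell_Q^{\al-n/p_0}\lesssim\ell_{Q_0}^{\al-n/p_0}$ (using $p_0<n/\al$), and multiply by $\mu(Q_0)^{1/p}|Q_0|^{1/p_0-1/p}$. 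This gives $\ell_{Q_0}^{\al}(\mu(Q_0)/|Q_0|)^{1/p}\le A_0$ by \eqref{1.3}.

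For the local part we build, in parallel for each $i$, corona stopping families $\cS_i\subset\cD(Q_0)$: $Q_0\in\cS_i$, and the $\cS_i$-children of $S$ are the maximal $Q\subset S$ with $\fint_Q f_i>2\fint_S f_i$. These families are sparse, with disjoint pieces $E_i(S)$, $|E_i(S)|\ge|S|/2$. Each $Q$ has stopping parents $\pi_i(Q)\in\cS_i$, with $\fint_Q f_i\le2\fint_{\pi_i(Q)}f_i$. The innermost of the $\pi_i(Q)$ is a cube $S=\pi(Q)$ in the family $\cS$ generated by all the $\cS_i$ jointly. Grouping the cubes $Q$ by $\pi(Q)=S$, we obtain
\[
\kI^{\cD}_\al(\vec f)\1_{Q_0}\lesssim\sum_{S\in\cS}\Big(\prod_i\fint_{\pi_i(S)}f_i\Big)\sum_{Q:\pi(Q)=S}\ell_Q^\al\1_Q .
\]
We then dualize in $L^p(\mu)$ with $g\in L^{p'}(\mu\1_{Q_0})$. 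For each $S$, the inner term pairs as
\[
\int\sum_{Q\subset S}\ell_Q^\al\1_Q\,g\,{\rm d}\mu=\sum_{Q\subset S}\ell_Q^\al\mu(Q)\langle g\rangle^\mu_Q .
\]
This is controlled by $\int_S M_\al[\mu\1_S]\,M^{\mu}g\,{\rm d}x$-type quantities. Here the testing condition $A_0$ enters, exactly as in Sawyer's testing argument, applied to the measure $\mu\1_S$, giving a bound like $A_0\,\mu(S)^{1/p}\big(\fint^\mu_S g^{p'}\big)^{1/p'}$ after a stopping argument on $g$ relative to $\mu$.

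The main obstacle is the final summation over $S\in\cS$: combining $\prod_i\fint_{\pi_i(S)}f_i$ with the $\mu$-pieces, and recovering the Morrey norm rather than an $L^{\vec P}$ norm. We would use Hölder in the form
\[
\sum_S a_S b_S\le\Big(\sum_S a_S^p\Big)^{1/p}\Big(\sum_S b_S^{p'}\Big)^{1/p'} .
\]
The $g$-side $\sum_S \mu(S)\big(\fint^\mu_S g\big)^{p'}$ is handled by a Carleson embedding for $\mu$ along a further stopping family. On the $f$-side we would use the sparse disjoint sets $E_i(S)$ and Hölder with $\sum 1/p_i=1/p$ to get $\lesssim\prod_i\|f_i\1_{Q_0}\|_{L^{p_i}}$. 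Multiplying by $|Q_0|^{1/p_0-1/p}$ then produces exactly $\|\vec f\|_{\cM^{\vec P,p_0}}$. The delicate point is that the $\mu$-weights $\mu(S)$ must be absorbed into $A_0$ without losing the factor $|S|$. I expect to need the estimate $\ell_S^{\al p}\mu(S)\le A_0^{p}|S|$ from \eqref{1.3} together with the testing bound, to balance the two sides.
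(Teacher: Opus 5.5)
Your discretization, the split into $Q\subset Q_0$ and $Q\supsetneq Q_0$, and the tail estimate are correct and coincide with the paper (Theorem~\ref{thm2.2} with $K(Q)=\ell_Q^{\al-mn}$, condition (D), and \eqref{1.3}). The gap is in the local part, exactly at the step you flag, and the ingredients you propose there do not close it.

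First, the per-$S$ bound $A_0\,\mu(S)^{1/p}(\fint^\mu_S g^{p'})^{1/p'}$ is false. With $g\equiv1$, the term $Q=S$ alone gives $\ell_S^\al\mu(S)$, so the bound would force $\ell_S^\al\mu(S)^{1/p'}\lesssim A_0$. This fails for $\mu=\1_{[0,1)^n}{\rm d}x$ (for which $A_0<\8$) and $S=[0,2^k)^n$, $k\to\8$.

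Second, the H\"older split with $g$-side $\sum_{S\in\cS}\mu(S)(\langle g\rangle^\mu_S)^{p'}$, and $f$-side absorbed via $\ell_S^{\al p}\mu(S)\le A_0^p|S|$, fails because $\cS$ is sparse for ${\rm d}x$, not for $\mu$. Take the same $\mu$, $Q_0=[0,2^K)^n$, $f_1=|x|^{-a}\1_{Q_0}$ with $0<a<n/p_1$, and $g=\1_{[0,1)^n}$. Then $\cS_1$ contains $\gtrsim K$ cubes $[0,2^j)^n\supset[0,1)^n$, so your $g$-side is $\gtrsim K$ while $\|g\|_{L^{p'}(\mu)}=1$. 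A further stopping family for $g$ relative to $\mu$ does not help, since $\langle g\rangle^\mu_Q=1$ whenever $\mu(Q)>0$.

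More fundamentally, that split uses $A_0$ only through \eqref{1.3}, i.e.\ only through $\|\mu\|_{n-\al p}^{1/p}$. No such argument can succeed: for $m=1$, $p=p_0$ it would, via the necessity half of Kerman--Sawyer, give the reverse of \eqref{1.3}, which fails by \cite{SST}.

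The missing idea is to collapse the inner sum using $\al>0$, namely $\sum_{Q\subset S}\ell_Q^\al\1_Q\lesssim\ell_S^\al\1_S$, and to put the $\mu$-dependent weight on the $g$-side:
\[
\sum_{S\in\cS}\lt(\prod_{i=1}^m\langle f_i\rangle_{\pi_i(S)}|S|^{1/p}\rt)\lt(\ell_S^{\al}\frac{\mu(S)}{|S|^{1/p}}\langle g\rangle^\mu_S\rt).
\]
After H\"older, the $f$-factor is $\lesssim\prod_i\|f_i\1_{Q_0}\|_{L^{p_i}}$, as you say. The coefficients $c_S=(\ell_S^{\al-n}\mu(S))^{p'}|S|$ satisfy, by the ${\rm d}x$-sparseness of each $\cS_j$,
\[
\sum_{S\in\cS,\,S\subset R}c_S\lesssim\int_R(M_\al[\mu\1_R])^{p'}\,{\rm d}x\le A_0^{p'}\mu(R)
\quad\text{for every dyadic }R.
\]
So the dyadic Carleson embedding theorem in $L^{p'}(\mu)$ bounds the $g$-factor by $A_0\|g\|_{L^{p'}(\mu)}$. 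Repaired this way, your route is the classical reduction to Carleson's embedding theorem that the paper explicitly contrasts itself with, and it uses only the $M_\al$ testing condition.

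The paper instead runs a parallel corona on $\phi=\kM^{\cD|_{Q_0}}(\vec f)$ with respect to ${\rm d}x$ and on $g$ with respect to $\mu$ (Theorem~\ref{thm2.1}), which produces two sparse testing constants. In Section~\ref{sec3} the ${\rm d}x$-sparse one is bounded by $A_0$ through $M_\al$ exactly as in the last display, and the $\mu$-sparse one through \eqref{1.3}.
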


\begin{xrem}
The $m$-linear fractional integral operator $\kI_{\al}$
and the $m$-sublinear fractional maximal operator $\kM_{\al}$ 
are studied on weighted Morrey type spaces 
in \cite{GM1,GM2,ISST1,ISST2,ISST3}.
\end{xrem}

We have used (and will use) the following notation.

\begin{enumerate}
\item
Denote by $\cQ=\cQ(\R^n)$ 
the family of all cubes in $\R^n$ with sides parallel to the axes. 
Given a~cube $Q\in\cQ$, 
denote by $c_{Q}$ and $\ell_{Q}$ its center and its side length of $Q$, respectively,
and $|Q|$ stands for the volume of $Q$.
\item
We define the set of all dyadic cubes in $\R^n$ by
\[
\cD=\cD(\R^n)
:=
\{2^{-k}(m+[0,1)^n):\,
k\in\Z,\,m\in\Z^n\}.
\]
That $\cD$ satisfies the following 
\textit{nested property}:
\begin{equation}\label{NestedProperty}
P,Q\in\cD 
\,\longrightarrow\,
P\cap Q\in\{P,Q,\0\}.
\end{equation}
\item
Given $Q\in\cD$ and 
$\cG\subset\cD$, we write
\[
\cG|_{Q}
:=
\{Q'\in\cG:\,Q'\subseteq Q\},
\]
that is, 
the restriction to $Q$ of $\cG$.
\item
We say that a family 
$\cS\subset\cD$ is \textit{sparse} 
if for every $S\in\cS$, 
there exists a measurable set 
$E_{\cS}(S)\subset S$ such that 
$2|E_{\cS}(S)|\ge|S|$, 
and the sets 
$\{E_{\cS}(S):\,S\in\cS\}$ 
are pairwise disjoint. 
\item
Given a measurable set $E\subset\R^n$, 
$\1_{E}$ denotes the characteristic function of $E$. 
\item
The barred integral 
$\fint_{S}f(y)\,{\rm d}y$ 
stands for the usual integral average of $f$ over the set $S$.
\item
Given $1<p<\8$, 
$p'=p/(p-1)$ 
denotes the conjugate exponent number of $p$. 
\item
A~\textit{weight} means a~nonnegative locally integrable function on $\R^n$.
For the weight $w$ and cube $Q\in\cQ$,
we write 
$w(Q)=\int_{Q}w\,{\rm d}x$.
\item
The letter $C$ will be used for constants that may change from one occurrence to another.
Constants with subscripts, such as $C_1$, $C_2$, do not change in different occurrences.
By $A\approx B$ we mean that 
$c^{-1}B\le A\le cB$ 
with some positive finite constant $c$ independent of appropriate quantities. 
We write $X\lesssim Y$, $Y\gtrsim X$ 
if there is a independent constant $c$ such that $X \le cY$. 
\end{enumerate}

\section{The dyadic analysis}\label{sec2}
Theorem \ref{thm1.3} 
can be proved through dyadic analysis.

Let $m\in\N$ and $\al\in[0, mn)$. 
For any vector-valued function
$\vec{f}:=(f_1,\ldots,f_m)$
with each $f_i\ge 0$ being a~locally integrable function on $\R^n$,
it is known that the operator $\kI_{\al}$
admits the following discretization:
\begin{align*}
\kI_{\al}(\vec{f})
&\lesssim
\sum_{Q\in\cD}
\ell_{Q}^{\al}
\prod_{i=1}^m
\frac{1}{|Q|}\int_{3Q}f_i\,{\rm d}x\,\1_{Q}.
\end{align*}
where 
$3Q$ denotes the cube with the same center as $Q$ 
with triple side length. 
This fact is proved in 
\cite[Section 4]{Moen09} and 
\cite[Lemma 2.1]{ISST3}.
Therefore, 
through the dyadic grid argument,
in order to prove 
Theorem \ref{thm1.3}, 
it suffices to establish 
the estimates for the following 
dyadic discrete-type operator.
Define
\begin{align*}
\kI_{\al}^{\cD}(\vec{f})
&:=
\sum_{Q\in\cD}
\ell_{Q}^{\al}
\prod_{i=1}^m
\fint_{Q}f_i\,{\rm d}x\,\1_{Q}
\\ &=
\sum_{Q\in\cD}
\ell_{Q}^{\al-mn}
\prod_{i=1}^m
\int_{Q}f_i\,{\rm d}x\,\1_{Q}.
\end{align*}

\subsection{The parallel corona decomposition}\label{ssec2.2}
In what follows, 
we use parallel corona decomposition 
(cf. the book \cite{ST} 
or the papers 
\cite{HHL,Hy,Ta1,Ta2,Ta3}).

\begin{thm}\label{thm2.1}
Let $\mu$ be a~nonnegative Radon measure on $\R^n$. 
Let $K:\cD\to[0, \8)$ be a~map. 
Let 
$p,p_1,\ldots,p_m\in(1, \8)$ with 
$1/p=1/p_1+\cdots+1/p_m$. 
Then, for any $Q_0\in\cD$, 
the multilinear embedding inequality
\begin{equation}\label{2.1}
\sum_{Q\in\cD|_{Q_0}}
K(Q)\prod_{i=1}^m
\int_{Q}f_i\,{\rm d}x
\int_{Q}g\,{\rm d}\mu
\lesssim A_0
\prod_{i=1}^m
\|f_i\1_{Q_0}\|_{L^{p_i}({\rm d}x)}
\|g\1_{Q_0}\|_{L^{p'}({\rm d}\mu)}
\end{equation}
holds for the nonnegative functions 
$f_1,\ldots,f_m\in L_{{\rm loc}}^1({\rm d}x)$ 
and 
$g\in L_{{\rm loc}}^1({\rm d}\mu)$. 
Here, 
$A_0=\sup_{Q\in\cD}A_{Q}$ 
and 
\[
A_{Q}
=
\max\lt(\begin{array}{l}\ds
\frac1{\mu(Q)^{1/p'}}
\sup_{\cS}
\lt(
\sum_{S\in\cS|_{Q}}
\lt(
\frac1{|S|^{1/p}}
\sum_{Q'\in\cD|_{S}}
K(Q')|Q'|^m\mu(Q')
\rt)^{p'}\rt)^{1/p'},
\\ \ds
\frac1{|Q|^{1/p}}
\sup_{\cS_{\mu}}
\lt(
\sum_{S\in\cS_{\mu}|_{Q}}
\lt(
\frac1{\mu(S)^{1/p'}}
\sum_{Q'\in\cD|_{S}}
K(Q')|Q'|^m\mu(Q')
\rt)^p\rt)^{1/p}
\end{array}\rt),
\]
where the supremums are taken over 
all sparse family $\cS$ with respect to ${\rm d}x$ 
and 
all sparse family $\cS_{\mu}$ with respect to $\mu$, 
respectively.
\end{thm}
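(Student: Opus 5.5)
The plan is a parallel corona decomposition: we run stopping-time constructions simultaneously for the $f_i$ (with respect to ${\rm d}x$) and for $g$ (with respect to $\mu$), and split the sum according to which corona is "smaller". Fix $Q_0$. We may assume the $f_i$ and $g$ are supported in $Q_0$, bounded, and that $\mu(Q)>0$ for the relevant cubes. Write $\langle f\rangle_Q=\fint_Q f\,{\rm d}x$ and $\langle g\rangle^\mu_Q=\frac1{\mu(Q)}\int_Q g\,{\rm d}\mu$.

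\emph{Corona for the Lebesgue side.} Let $\cF$ be the stopping family with $Q_0\in\cF$. The children of $F\in\cF$ are the maximal $F'\subsetneq F$ with
\[
\prod_{i}\langle f_i\rangle_{F'}>2^{m+1}\prod_i\langle f_i\rangle_F,
\]
or, more conveniently, with $\langle f_i\rangle_{F'}>2^{m+1}\langle f_i\rangle_F$ for some $i$. Set $\pi_{\cF}(Q)$ to be the minimal element of $\cF$ containing $Q$. Then $\cF$ is sparse with respect to ${\rm d}x$, and $\langle f_i\rangle_Q\lesssim\langle f_i\rangle_{\pi_\cF(Q)}$ for every $i$. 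Similarly, let $\cG$ be the stopping family for $g$ with respect to $\mu$, with children the maximal cubes where $\langle g\rangle^\mu_{G'}>2\langle g\rangle^\mu_G$. Then $\cG$ is sparse with respect to $\mu$, and $\langle g\rangle^\mu_Q\le2\langle g\rangle^\mu_{\pi_\cG(Q)}$. The summand equals $K(Q)|Q|^m\mu(Q)\prod_i\langle f_i\rangle_Q\langle g\rangle^\mu_Q$. Each $Q$ has the pair $(F,G)=(\pi_\cF Q,\pi_\cG Q)$, and by nestedness either $F\subseteq G$ or $G\subsetneq F$. We split the sum accordingly into $\Sigma_1$ (where $\pi_\cG Q\supseteq\pi_\cF Q$) and $\Sigma_2$.

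\emph{Estimate of $\Sigma_1$.} Group by $F\in\cF$. For $Q$ with $\pi_\cF Q=F$ and $\pi_\cG Q\supseteq F$, the $g$-average is controlled by $2\langle g\rangle^\mu_{\pi_\cG F}$. This gives
\[
\Sigma_1\lesssim\sum_{F\in\cF}\prod_i\langle f_i\rangle_F\,\langle g\rangle^\mu_{\pi_\cG F}\sum_{Q'\in\cD|_F}K(Q')|Q'|^m\mu(Q').
\]
We write $\prod_i\langle f_i\rangle_F=|F|^{-1/p}\cdot|F|^{1/p}\prod_i\langle f_i\rangle_F$ and apply H\"older in $(p,p')$ over $F\in\cF$. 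The $p$-factor is $\sum_F|F|\prod_i\langle f_i\rangle_F^p\lesssim\sum_F|E(F)|\prod_i (M f_i)^{p}$ on $E(F)$. By H\"older with exponents $p_i/p$ and the boundedness of $M$ on $L^{p_i}$, this gives $\prod_i\|f_i\|_{p_i}^p$. The $p'$-factor is
\[
\sum_G(\langle g\rangle^\mu_G)^{p'}\sum_{F\in\cF,\,\pi_\cG F=G}\Big(|F|^{-1/p}\sum_{Q'\subseteq F}K|Q'|^m\mu\Big)^{p'}.
\]
The sparse family $\{F:\pi_\cG F=G\}\subset\cF|_G$ is admissible in the first part of $A_G$, so the inner sum is at most $A_0^{p'}\mu(G)$. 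What remains is $\sum_G(\langle g\rangle^\mu_G)^{p'}\mu(G)\lesssim\|g\|_{L^{p'}(\mu)}^{p'}$, by $\mu$-sparseness and the dyadic $\mu$-maximal theorem.

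\emph{Estimate of $\Sigma_2$.} This is symmetric. We group by $G$, bound $\prod\langle f_i\rangle_Q\lesssim\prod\langle f_i\rangle_{\pi_\cF G}$, and write $\langle g\rangle^\mu_G=\mu(G)^{-1/p'}\cdot\mu(G)^{1/p'}\langle g\rangle^\mu_G$. H\"older leaves
\[
\sum_F\prod_i\langle f_i\rangle_F^p\sum_{G:\pi_\cF G=F}\Big(\mu(G)^{-1/p'}\sum_{Q'\subseteq G}K|Q'|^m\mu\Big)^p\le A_0^p\sum_F|F|\prod_i\langle f_i\rangle_F^p,
\]
using the second part of $A_F$. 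The final sum is handled as before. The $g$-factor is $\sum_G\mu(G)(\langle g\rangle^\mu_G)^{p'}\lesssim\|g\|^{p'}_{L^{p'}(\mu)}$.

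\emph{Main obstacle.} The delicate point is the multilinear Lebesgue stopping rule. We need both sparseness of $\cF$ and the pointwise domination $\prod_i\langle f_i\rangle_Q\lesssim\prod_i\langle f_i\rangle_{\pi_\cF Q}$, together with the Carleson-type bound $\sum_F|F|\prod\langle f_i\rangle_F^p\lesssim\prod\|f_i\|_{p_i}^p$. Stopping on each $f_i$ separately gives sparseness: the union of children has measure at most $\sum_i|F|/2^{m+1}\le|F|/2$. Sparseness then yields the Carleson bound via $\prod_i Mf_i$ and H\"older, which works since $p_i>1$. A second technical issue is that the restriction $\{F\in\cF:\pi_\cG F=G\}$ must be contained in $G$ and remain sparse. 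Sparseness is inherited by subfamilies, and containment follows from the definition of $\pi_\cG$, so this is fine.
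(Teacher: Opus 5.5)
Your proof is correct. It uses the same parallel corona skeleton as the paper, but it differs in two places.

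First, the paper dominates $\prod_i\fint_Q f_i$ by $\fint_Q\phi$ with $\phi=\kM^{\cD|_{Q_0}}(\vec f)$ and stops on that single function, using $\|\phi\1_{Q_0}\|_{L^p}\lesssim\prod_i\|f_i\1_{Q_0}\|_{L^{p_i}}$. You instead stop on each $f_i$ separately with threshold $2^{m+1}$. You then obtain $\sum_{F\in\cF}|F|\prod_i\langle f_i\rangle_F^p\lesssim\prod_i\|f_i\|_{p_i}^p$ from sparseness, $\prod_i Mf_i$ and H\"older. The two devices are interchangeable here.

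Second, and more substantially, the paper handles the case $F\subseteq G$ by fixing $G$ and applying H\"older over $F\in\cF|_G$ inside $G$. To avoid overcounting $F$'s that lie in several $G$'s, it then needs the modified functions $\phi^G$, the children ${\rm ch}^*_{\cG}(G)$, observation (2.6), and the estimate of $\sum_G\|\phi^G\|_{L^p}^p$. You use that $\pi_\cF Q=F\subseteq\pi_\cG Q$ forces $\pi_\cG Q=\pi_\cG F$, so each $F$ carries the single average $\langle g\rangle^\mu_{\pi_\cG F}$. You then apply H\"older once over all of $\cF$ and regroup the $p'$-factor along the partition $\cF=\bigsqcup_{G\in\cG}\{F:\pi_\cG F=G\}$, feeding each block into the first part of $A_G$. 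The mirror case uses the second part of $A_F$ with $\{G:\pi_\cF G=F\}$, and you write it out explicitly rather than invoking symmetry.

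This removes all the $\phi^G$ bookkeeping. The shortcut works because $A_Q$ is a supremum over sparse families of explicit scalar quantities $\sum_{Q'\in\cD|_S}K(Q')|Q'|^m\mu(Q')$, so nothing has to stay under an integral. The paper's $\phi^G$ device is inherited from Hyt\"onen-style parallel corona proofs with Sawyer-type testing conditions of integral form. There the function must remain inside the integral against the operator applied to an indicator, and the device is the tool one would need if the theorem were stated with such testing conditions.
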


\noindent\textbf{Proof}\quad
There holds
\begin{align*}
\sum_{Q\in\cD|_{Q_0}}
K(Q)\prod_{i=1}^m
\int_{Q}f_i\,{\rm d}x
\int_{Q}g\,{\rm d}\mu
&=
\sum_{Q\in\cD|_{Q_0}}
K(Q)|Q|^m\prod_{i=1}^m
\fint_{Q}f_i\,{\rm d}x
\int_{Q}g\,{\rm d}\mu
\\ &\le
\sum_{Q\in\cD|_{Q_0}}
K(Q)|Q|^{m-1}
\int_{Q}\kM^{\cD|_{Q_0}}(\vec{f})\,{\rm d}x
\int_{Q}g\,{\rm d}\mu,
\end{align*}
where 
\[
\kM^{\cD|_{Q_0}}(\vec{f})
:=
\sup_{Q\in\cD|_{Q_0}}
\prod_{i=1}^m
\fint_{Q}f_i\,{\rm d}x\,\1_{Q}.
\]
For simplicity, we write 
$\kM^{\cD|_{Q_0}}(\vec{f})=\phi$.

It follows from multiple H\"{o}lder's inequality 
with 
$1/p=1/p_1+\cdots+1/p_m$ and 
the boundedness of dyadic Hardy-Littlewood maximal operator 
that
\begin{equation}\label{2.2}
\|\phi\1_{Q_0}\|_{L^p({\rm d}x)}
\lesssim
\prod_{i=1}^m
\|f_i\1_{Q_0}\|_{L^{p_i}({\rm d}x)}.
\end{equation}

We define the collection of principal cubes 
$\cF$ for the pair $(\phi,{\rm d}x)$ 
and $\cG$ for the pair $(g,\mu)$. 
Namely, analogously for $\cG$, 
\[
\cF
:=
\bigcup_{k=0}^{\8}\cF_k,
\]
where 
$\cF_0:=\{Q_0\}$,
\[
\cF_{k+1}
:=
\bigcup_{F\in\cF_k}
{\rm ch}_{\cF}(F)
\]
and ${\rm ch}_{\cF}(F)$ 
is defined by 
the set of all \textit{maximal} dyadic cubes $Q\subset F$ such that 
\[
\fint_{Q}\phi\,{\rm d}x
>
2\fint_{F}\phi\,{\rm d}x.
\]
Observe that
\begin{align*}
\sum_{F'\in{\rm ch}_{\cF}(F)}
|F'|
&\le
\lt(2\fint_{F}\phi\,{\rm d}x\rt)^{-1}
\sum_{F'\in{\rm ch}_{\cF}(F)}
\int_{F'}\phi\,{\rm d}x
\le
\lt(2\fint_{F}\phi\,{\rm d}x\rt)^{-1}
\int_{F}\phi\,{\rm d}x
=
\frac{|F|}{2},
\end{align*}
which implies 
\begin{equation}\label{2.3}
|E_{\cF}(F)|
:=
\lt|
F\setminus\bigcup_{F'\in{\rm ch}_{\cF}(F)}
F'
\rt|
\ge
\frac{|F|}{2},
\end{equation}
where the sets 
$E_{\cF}(F)$, $F\in\cF$, 
are pairwise disjoint. 

We further define the stopping parents, 
for $Q\in\cD|_{Q_0}$, 
\[
\begin{cases}\ds
\pi_{\cF}(Q)
:=
\min\{F\supset Q:\,F\in\cF\},
\\ \ds
\pi_{\cG}(Q)
:=
\min\{G\supset Q:\,G\in\cG\},
\\ \ds
\pi(Q)
:=
(\pi_{\cF}(Q),\pi_{\cG}(Q)).
\end{cases}
\]
Then we can rewrite the series in \eqref{2.1} as follows:
\begin{align*}
\sum_{Q\in\cD|_{Q_0}}
=
\sum_{\substack{F\in\cF, \\ G\in\cG}}
\sum_{\substack{Q: \\ \pi(Q)=(F,G)}}
\le
\sum_{F\in\cF}
\sum_{G\in\cG|_{F}}
\sum_{\substack{Q: \\ \pi(Q)=(F,G)}}
+
\sum_{G\in\cG}
\sum_{F\in\cF|_{G}}
\sum_{\substack{Q: \\ \pi(Q)=(F,G)}},
\end{align*}
where we have used 
\eqref{NestedProperty}:
if $P,Q\in\cD$ then 
$P\cap Q\in\{P,Q,\0\}$. 
Since the proof can be done in a completely symmetric way, 
we shall concentrate ourselves on the second case only.

We notice that
\begin{equation}\label{2.4}
\begin{cases}\ds
\fint_{Q}\phi\,{\rm d}x
\le
2\fint_{F}\phi\,{\rm d}x
\quad\text{when}\quad
\pi_{\cF}(Q)=F,
\\ \ds
\fint_{Q}g\,{\rm d}\mu
\le
2\fint_{G}g\,{\rm d}\mu
\quad\text{when}\quad
\pi_{\cG}(Q)=G.
\end{cases}
\end{equation}
Here, 
$\fint_{Q}g\,{\rm d}\mu
=
\mu(Q)^{-1}\int_{Q}g\,{\rm d}\mu.$

We estimate 
\begin{equation}\label{2.5}
{\rm(i)}
:=
\sum_{G\in\cG}
\sum_{F\in\cF|_{G}}
\sum_{\substack{Q: \\ \pi(Q)=(F,G)}}
K(Q)|Q|^{m-1}
\int_{Q}\phi\,{\rm d}x
\int_{Q}g\,{\rm d}\mu.
\end{equation}

Fix $G\in\cG$. 
It follows from \eqref{2.4} that
\begin{align*}
\lefteqn{
\sum_{F\in\cF|_{G}}
\sum_{\substack{Q: \\ \pi(Q)=(F,G)}}
K(Q)|Q|^{m-1}
\int_{Q}\phi\,{\rm d}x
\int_{Q}g\,{\rm d}\mu
}\\ &\le 2
\fint_{G}g\,{\rm d}\mu
\sum_{F\in\cF|_{G}}
\sum_{\substack{Q: \\ \pi(Q)=(F,G)}}
K(Q)|Q|^{m-1}\mu(Q)
\int_{Q}\phi\,{\rm d}x.
\end{align*}

We need two observations. 
Suppose that 
$\pi(Q)=(F,G)$, 
$F\in\cF|_{G}$. 
If $G'\in{\rm ch}_{\cG}(G)$ 
satisfies $G'\subset Q$, then, 
by the definition of $\pi(Q)$, 
we must have 
\begin{equation}\label{2.6}
\pi_{\cG}\lt(\pi_{\cF}(G')\rt)
=
\begin{cases}\ds
G,\quad G'\notin\cF|_{G},
\\ \ds
G',\quad G'\in\cF|_{G}.
\end{cases}
\end{equation}
By this observation, we define 
\[
{\rm ch}_{\cG}^*(G)
:=
\lt\{G'\in{\rm ch}_{\cG}(G):\,\text{
$G'$ satisfies \eqref{2.6}
}\rt\}.
\]
We further observe that, 
when $G'\in{\rm ch}_{\cG}^*(G)$, 
we can regard $\phi$ as a constant on $G'$ in the above integrals, 
that is, 
we can replace $\phi$ by $\phi^{G}$ in the above integrals,
where
\[
\phi^{G}
:=
\phi\1_{E_{\cG}(G)}
+
\sum_{G'\in{\rm ch}_{\cG}^*(G)}
\fint_{G'}\phi\,{\rm d}x\,\1_{G'}.
\]

It follows from \eqref{2.4} and 
H\"{o}lder's inequality with 
$1/p+1/p'=1$ that
\begin{align*}
\lefteqn{
\sum_{F\in\cF|_{G}}
\sum_{\substack{Q: \\ \pi(Q)=(F,G)}}
K(Q)|Q|^{m-1}\mu(Q)
\int_{Q}\phi^{G}\,{\rm d}x
}\\ &\le 2
\sum_{F\in\cF|_{G}}
\fint_{F}\phi^{G}\,{\rm d}x
\sum_{\substack{Q: \\ \pi(Q)=(F,G)}}
K(Q)|Q|^m\mu(Q)
\\ &\le 2
\lt[
\sum_{F\in\cF|_{G}}
\lt(\fint_{F}\phi^{G}\,{\rm d}x\rt)^p
|F|
\rt]^{1/p}
\lt[
\sum_{F\in\cF|_{G}}
\lt(
\frac1{|F|^{1/p}}
\sum_{Q\in\cD|_{F}}
K(Q)|Q|^m\mu(Q)
\rt)^{p'}
\rt]^{1/p'}.
\end{align*}
By the fact that 
$|F|\le 2|E_{\cF}(F)|$ and 
$E_{\cF}(F)$, $F\in\cF|_{G}$, 
are pairwise disjoint,
\begin{align*}
\lt[
\sum_{F\in\cF|_{G}}
\lt(\fint_{F}\phi^{G}\,{\rm d}x\rt)^p
|F|
\rt]^{1/p}
&\lesssim
\lt[
\sum_{F\in\cF|_{G}}
\lt(\fint_{F}\phi^{G}\,{\rm d}x\rt)^p
|E_{\cF}(F)|
\rt]^{1/p}
\\ &\le
\lt[
\int_{G}(M\phi^{G})^p\,{\rm d}x
\rt]^{1/p}
\lesssim
\|\phi^{G}\|_{L^p({\rm d}x)}.
\end{align*}

Setting
\[
A_{G}
=
\frac1{\mu(G)^{1/p'}}
\lt[
\sum_{F\in\cF|_{G}}
\lt(
\frac1{|F|^{1/p}}
\sum_{Q\in\cD|_{F}}
K(Q)|Q|^m\mu(Q)
\rt)^{p'}\rt]^{1/p'},
\]
we conclude 
\[
{\rm(i)}
\lesssim
\sup_{G\in\cG}A_{G}
\sum_{G\in\cG}
\|\phi^{G}\|_{L^p({\rm d}x)}
\fint_{G}g\,{\rm d}\mu
\mu(G)^{1/p'}.
\]
From H\"{o}lder's inequality,
\begin{align*}
\lefteqn{
\sum_{G\in\cG}
\|\phi^{G}\|_{L^p({\rm d}x)}
\fint_{G}g\,{\rm d}\mu
\mu(G)^{1/p'}
}\\ &\le
\lt(
\sum_{G\in\cG}
\|\phi^{G}\|_{L^p({\rm d}x)}^p
\rt)^{1/p}
\lt(
\sum_{G\in\cG}
\lt(\fint_{G}g\,{\rm d}\mu\rt)^{p'}
\mu(G)
\rt)^{1/p'}
=:
{\rm(i_1)}\times{\rm(i_2)}.
\end{align*}

For ${\rm(i_2)}$, using 
$\mu(G)\le 2\mu(E_{\cG}(G))$,
the fact that 
\[
\fint_{G}g\,{\rm d}\mu
\le
\inf_{y\in G}M_{\mu}g(y)
\]
and the disjointness of the sets 
$E_{\cG}(G)$, we have that
\begin{align*}
{\rm(i_2)}
&\lesssim
\lt(
\sum_{G\in\cG}
\int_{E_{\cG}(G)}(M_{\mu}[g\1_{Q_0}])^{p'}\,{\rm d}\mu
\rt)^{1/p'}
\le
\|M_{\mu}[g\1_{Q_0}]\|_{L^{p'}({\rm d}\mu)}
\lesssim
\|g\1_{Q_0}\|_{L^{p'}({\rm d}\mu)}.
\end{align*}
Here, 
$M_{\mu}$ is the dyadic Hardy-Littlewood maximal operator and 
we have used the $L^{p'}({\rm d}\mu)$-boundedness of $M_{\mu}$. 

It remains to estimate ${\rm(i_1)}$. 
It follows that 
\[
{\rm(i_1)}^p
=
\sum_{G\in\cG}
\int_{E_{\cG}(G)}\phi^p\,{\rm d}x
+
\sum_{G\in\cG}
\sum_{G'\in{\rm ch}_{\cG}^*(G)}
\lt(\fint_{G'}\phi\,{\rm d}x\rt)^p
|G'|.
\]
By the pairwise disjointness of the sets $E_{\cG}(G)$, 
it is immediate that
\[
\sum_{G\in\cG}
\int_{E_{\cG}(G)}\phi^p\,{\rm d}x
\le
\|\phi\1_{Q_0}\|_{L^p({\rm d}x)}^p.
\]
For the remaining double sum, 
we first notice that
\begin{align*}
{\rm(ii)}
&:=
\sum_{F\in\cF}
\lt(\fint_{F}\phi\,{\rm d}x\rt)^p
|F|
\lesssim
\|M[\phi\1_{Q_0}]\|_{L^p({\rm d}x)}^p
\le
\|\phi\1_{Q_0}\|_{L^p({\rm d}x)}^p.
\end{align*}
It follows from \eqref{2.6} that 
\begin{align*}
\sum_{G\in\cG}
\sum_{\substack{
G'\in{\rm ch}_{\cG}^*(G): \\ \pi_{\cG}\lt(\pi_{\cF}(G')\rt)=G
}}
\lt(\fint_{G'}\phi\,{\rm d}x\rt)^p
|G'|
&=
\sum_{F\in\cF}
\sum_{\substack{
G'\in{\rm ch}_{\cG}^*(G): \\ \pi_{\cF}(G')=F
}}
\lt(\fint_{G'}\phi\,{\rm d}x\rt)^p
|G'|
\\ &\le 2^p
\sum_{F\in\cF}
\lt(\fint_{F}\phi\,{\rm d}x\rt)^p
\sum_{\substack{
G'\in{\rm ch}_{\cG}^*(G): \\ \pi_{\cF}(G')=F
}}
|G'|
\\ &\le 2^p
\sum_{F\in\cF}
\lt(\fint_{F}\phi\,{\rm d}x\rt)^p
|F|
=2^p{\rm(ii)}.
\end{align*}
It follows also from \eqref{2.6} that 
\begin{align*}
\sum_{G\in\cG}
\sum_{\substack{
G'\in{\rm ch}_{\cG}^*(G): \\ \pi_{\cG}\lt(\pi_{\cF}(G')\rt)=G'
}}
\lt(\fint_{G'}\phi\,{\rm d}x\rt)^p
|G'|
\le
\sum_{F\in\cF}
\lt(\fint_{F}\phi\,{\rm d}x\rt)^p
|F|
=2^p{\rm(ii)}.
\end{align*}

Altogether, we obtain 
\[
{\rm(i)}
\lesssim
\sup_{Q\in\cD}A_{Q}
\cdot
\prod_{i=1}^m
\|f_i\1_{Q_0}\|_{L^{p_i}({\rm d}x)}
\|g\1_{Q_0}\|_{L^{p'}({\rm d}\mu)}.
\]
This yields the theorem. 
\qed

\begin{thm}\label{thm2.2}
Let $\mu$ be a~nonnegative Radon measure on $\R^n$. 
Let $K:\cD\to[0, \8)$ be a~map. 
Let 
$\vec{P}:=(p_1,\ldots,p_m)$ with 
$p_1,\ldots,p_m\in(1, \8)$ and 
$1/p=1/p_1+\cdots+1/p_m$.
Let $1<p\le p_0<\8$.
For a nonnegative vector-valued function
$\vec{f}=(f_1,\ldots,f_m)$,
where each $f_i\ge 0$ 
is locally integrable on $\R^n$, 
we define
\[
\kI_{K}^{\cD}(\vec{f})
:=
\sum_{Q\in\cD}
K(Q)\prod_{i=1}^m
\int_{Q}f_i\,{\rm d}x\,\1_{Q}.
\]
Suppose that, for any $Q\in\cD$,
\begin{equation}\tag{D}
\sum_{\substack{
Q'\in\cD: \\ Q'\supsetneq Q
}}
K(Q')|Q'|^{m-1/p_0}
\lesssim
K(Q)|Q|^{m-1/p_0}.
\end{equation}
Then, the inequality
\[
\|\kI_{K}^{\cD}(\vec{f})\|_{\cM_{\mu}^{p,\,p_0}(\R^n)}
\lesssim A_0
\|\vec{f}\|_{\cM^{\vec{P},\,p_0}((\R^n)^m)}
\]
holds if 
$A_0=\sup_{Q\in\cD}A_{Q}<\8$ 
and 
\[
A_{Q}
=
\max\lt(\begin{array}{l}\ds
\frac1{\mu(Q)^{1/p'}}
\sup_{\cS}
\lt(
\sum_{S\in\cS|_{Q}}
\lt(
\frac1{|S|^{1/p}}
\sum_{Q'\in\cD|_{S}}
K(Q')|Q'|^m\mu(Q')
\rt)^{p'}\rt)^{1/p'},
\\ \ds
\frac1{|Q|^{1/p}}
\sup_{\cS_{\mu}}
\lt(
\sum_{S\in\cS_{\mu}|_{Q}}
\lt(
\frac1{\mu(S)^{1/p'}}
\sum_{Q'\in\cD|_{S}}
K(Q')|Q'|^m\mu(Q')
\rt)^p\rt)^{1/p}
\end{array}\rt),
\]
where the supremums are taken over 
all sparse family $\cS$ with respect to ${\rm d}x$ 
and 
all sparse family $\cS_{\mu}$ with respect to $\mu$, 
respectively.
\end{thm}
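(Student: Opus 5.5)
We fix a cube and reduce to dyadic cubes; the plan is to split $\kI_{K}^{\cD}(\vec f)$ into a local part, handled by Theorem \ref{thm2.1} via duality, and a tail part, handled by condition (D). By the definition of the Radon--Morrey norm (cubes in $\cQ$ can be covered by boundedly many dyadic cubes of comparable size), it suffices to show, for each $Q_0\in\cD$,
\[
|Q_0|^{1/p_0-1/p}\lt(\int_{Q_0}\kI_{K}^{\cD}(\vec f)^p\,{\rm d}\mu\rt)^{1/p}
\lesssim A_0\|\vec f\|_{\cM^{\vec P,\,p_0}}.
\]
On $Q_0$ we write
\[
\kI_{K}^{\cD}(\vec f)\1_{Q_0}
=\sum_{Q\in\cD|_{Q_0}}K(Q)\prod_{i=1}^m\int_Q f_i\,{\rm d}x\,\1_Q
+\sum_{Q\supsetneq Q_0}K(Q)\prod_{i=1}^m\int_Q f_i\,{\rm d}x\,\1_{Q_0}
=:{\rm I}+{\rm II}.
\]
In the local term ${\rm I}$ only the values of $f_i$ on $Q_0$ matter, so $f_i$ may be replaced by $f_i\1_{Q_0}$.

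\textbf{Local term.} Dualize the $L^p(\mu)$ norm of ${\rm I}$ against $g\ge0$ with $\|g\1_{Q_0}\|_{L^{p'}({\rm d}\mu)}\le1$. Theorem \ref{thm2.1} then gives
\[
\|{\rm I}\|_{L^p({\rm d}\mu)}\lesssim A_0\prod_{i=1}^m\|f_i\1_{Q_0}\|_{L^{p_i}({\rm d}x)}.
\]
Multiplying by $|Q_0|^{1/p_0-1/p}$, the right-hand side is exactly the quantity appearing in the product Morrey norm at the cube $Q_0$. So this term is at most $A_0\|\vec f\|_{\cM^{\vec P,\,p_0}}$.

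\textbf{Tail term.} For $Q\supsetneq Q_0$, H\"older's inequality gives
\[
\prod_{i=1}^m\int_Qf_i\,{\rm d}x\le|Q|^{m-1/p}\prod_{i=1}^m\lt(\int_Qf_i^{p_i}\,{\rm d}x\rt)^{1/p_i}\le|Q|^{m-1/p_0}\|\vec f\|_{\cM^{\vec P,\,p_0}},
\]
since $\sum_i 1/p_i=1/p$. Summing over $Q\supsetneq Q_0$ and applying (D) yields
\[
{\rm II}\lesssim K(Q_0)|Q_0|^{m-1/p_0}\|\vec f\|_{\cM^{\vec P,\,p_0}}\1_{Q_0}.
\]
Hence
\[
|Q_0|^{1/p_0-1/p}\|{\rm II}\|_{L^p({\rm d}\mu)}\lesssim K(Q_0)|Q_0|^{m-1/p}\mu(Q_0)^{1/p}\|\vec f\|_{\cM^{\vec P,\,p_0}}.
\]

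\textbf{Main obstacle.} It remains to show $K(Q_0)|Q_0|^{m-1/p}\mu(Q_0)^{1/p}\lesssim A_0$; this is the step I expect to be most delicate. Test the first expression in $A_{Q_0}$ with the sparse family $\cS=\{Q_0\}$ and keep only the term $Q'=Q_0$ in the inner sum. This gives
\[
A_0\ge\mu(Q_0)^{-1/p'}|Q_0|^{-1/p}K(Q_0)|Q_0|^m\mu(Q_0)=K(Q_0)|Q_0|^{m-1/p}\mu(Q_0)^{1/p},
\]
using $1-1/p'=1/p$. This is exactly the required bound.

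Combining the local and tail estimates and taking the supremum over $Q_0$ proves the theorem. The only points needing care are the reduction from general cubes to dyadic ones and keeping track of which $f_i$ are localized in each term.
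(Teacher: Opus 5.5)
Your proposal is correct and follows the paper's proof essentially step for step. You use the same split into the local part over $\cD|_{Q_0}$, handled by Theorem \ref{thm2.1} through duality, and the tail over $Q\supsetneq Q_0$, handled by H\"older's inequality, the product Morrey norm and (D), and you end with the same bound $K(Q_0)|Q_0|^{m-1/p}\mu(Q_0)^{1/p}\le A_0$. Two of your steps make explicit what the paper only asserts or leaves implicit: testing the first expression in $A_{Q_0}$ with the sparse family $\{Q_0\}$, and reducing from arbitrary cubes to dyadic ones.
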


\noindent\textbf{Proof}\quad
Fix $Q_0\in\cD$.
We decompose
\begin{align*}
\kI_{K}^{\cD}(\vec{f})
&=
\lt(
\sum_{Q\in\cD|_{Q_0}}
+
\sum_{\substack{
Q\in\cD: \\ Q\supsetneq Q_0
}}
\rt)
K(Q)\prod_{i=1}^m
\int_{Q}f_i\,{\rm d}x\,\1_{Q}
=:
F_1+F_2.
\end{align*}

By applying Theorem \ref{thm2.1}, 
we have that, for the nonnegative function $g$,
\[
\sum_{Q\in\cD|_{Q_0}}
K(Q)\prod_{i=1}^m
\int_{Q}f_i\,{\rm d}x
\,\int_{Q}g\,{\rm d}\mu
\lesssim A_0
\prod_{i=1}^m
\|f_i\1_{Q_0}\|_{L^{p_i}({\rm d}x)}
\|g\1_{Q_0}\|_{L^{p'}({\rm d}\mu)}
\]
holds if 
$A_0=\sup_{Q\in\cD}A_{Q}<\8$.

This implies
\begin{align*}
|Q_0|^{1/p_0-1/p}
\|F_1\|_{L^p({\rm d}\mu)}
&\lesssim A_0
|Q_0|^{1/p_0-1/p}
\prod_{i=1}^m
\|f_i\1_{Q_0}\|_{L^{p_i}({\rm d}x)}
\le A_0
\|\vec{f}\|_{\cM^{\vec{P},\,p_0}((\R^n)^m)}.
\end{align*}

There holds
\begin{align*}
\lefteqn{
|Q_0|^{1/p_0-1/p}
\|F_2\|_{L^p({\rm d}\mu)}
}\\ &\le
|Q_0|^{1/p_0-1/p}
\|\1_{Q_0}\|_{L^p({\rm d}\mu)}
\sum_{\substack{
Q\in\cD: \\ Q\supsetneq Q_0
}}
K(Q)\prod_{i=1}^m
\int_{Q}f_i\,{\rm d}x
\\ &=
|Q_0|^{1/p_0}
\lt(\frac{\mu(Q_0)}{|Q_0|}\rt)^{1/p}
\sum_{\substack{
Q\in\cD: \\ Q\supsetneq Q_0
}}
K(Q)|Q|^m
\prod_{i=1}^m
\fint_{Q}f_i\,{\rm d}x
\\ &\le
|Q_0|^{1/p_0}
\lt(\frac{\mu(Q_0)}{|Q_0|}\rt)^{1/p}
\sum_{\substack{
Q\in\cD: \\ Q\supsetneq Q_0
}}
K(Q)|Q|^{m-1/p_0}
\lt[
|Q|^{1/p_0}
\prod_{i=1}^m
\lt(
\fint_{Q}f_i^{p_i}\,{\rm d}x
\rt)^{1/p_i}
\rt]
\\ &\le
\|\vec{f}\|_{\cM^{\vec{P},\,p_0}((\R^n)^m)}
|Q_0|^{1/p_0}
\lt(\frac{\mu(Q_0)}{|Q_0|}\rt)^{1/p}
\sum_{\substack{
Q\in\cD: \\ Q\supseteq Q_0
}}
K(Q)|Q|^{m-1/p_0}
\\ &\lesssim
K(Q_0)|Q_0|^m
\lt(\frac{\mu(Q_0)}{|Q_0|}\rt)^{1/p}
\|\vec{f}\|_{\cM^{\vec{P},\,p_0}((\R^n)^m)},
\end{align*}
where we have used (D).
By noticing
\[
K(Q_0)|Q_0|^m
\lt(\frac{\mu(Q_0)}{|Q_0|}\rt)^{1/p}
\le A_0,
\]
these complete the proof.
\qed

\section{Proof of Theorem \ref{thm1.3}}\label{sec3}
In what follows, 
we prove Theorem \ref{thm1.3}.
We first notice the following.

\begin{itemize}
\item
Let $1<p\le p_0<\frac{n}{\al}$, 
$0<\al<n$.
Set, for $Q\in\cD$, 
$K(Q):=\ell_{Q}^{\al-mn}$. 
The condition:
\begin{equation}\tag{D}
\sum_{\substack{
Q'\in\cD: \\ Q'\supsetneq Q
}}
K(Q')|Q'|^{m-1/p_0}
\lesssim
K(Q)|Q|^{m-1/p_0}
\end{equation}
is satisfied because then 
$\al-n/p_0<0$.
\item
Recall that, 
for a~sparse family $\cS$,
\[
A_{Q}'
:=
\frac1{\mu(Q)^{1/p'}}
\lt(
\sum_{S\in\cS|_{Q}}
\lt(
\frac1{|S|^{1/p}}
\sum_{Q'\in\cD|_{S}}
K(Q')|Q'|^m\mu(Q')
\rt)^{p'}\rt)^{1/p'}.
\]
Setting 
$K(Q'):=\ell_{Q'}^{\al-mn}$,
we have that
\begin{align*}
A_{Q}'
&\lesssim
\frac1{\mu(Q)^{1/p'}}
\lt(
\sum_{S\in\cS|_{Q}}
\lt(
\ell_{S}^{\al}
\frac{\mu(S)}{|S|^{1/p}}
\rt)^{p'}
\rt)^{1/p'}
=
\frac1{\mu(Q)^{1/p'}}
\lt(
\sum_{S\in\cS|_{Q}}
\lt(
\ell_{S}^{\al}
\frac{|S|^{1/p'}\mu(S)}{|S|}
\rt)^{p'}
\rt)^{1/p'}
\\ &=
\frac1{\mu(Q)^{1/p'}}
\lt(
\sum_{S\in\cS|_{Q}}
\lt(
\ell_{S}^{\al}
\frac{\mu(S)}{|S|}
\rt)^{p'}
|S|
\rt)^{1/p'}
\lesssim
\frac1{\mu(Q)^{1/p'}}
\lt(
\sum_{S\in\cS|_{Q}}
\lt(
\ell_{S}^{\al}
\frac{\mu(S)}{|S|}
\rt)^{p'}
|E_{\cS}(S)|
\rt)^{1/p'}
\\ &\le
\frac1{\mu(Q)^{1/p'}}
\lt(
\int_{Q}M_{\al}[\mu\1_{Q}]^{p'}\,{\rm d}x
\rt)^{1/p'}
=
\lt(
\frac{(M_{\al}[\mu\1_{Q}])^{p'}(Q)}{\mu(Q)}
\rt)^{1/p'}.
\end{align*}

Recall also that, 
for a~sparse family $\cS_{\mu}$ 
with respect to $\mu$,
\[
A_{Q}''
:=
\frac1{|Q|^{1/p}}
\lt(
\sum_{S\in\cS_{\mu}|_{Q}}
\lt(
\frac1{\mu(S)^{1/p'}}
\sum_{Q'\in\cD|_{S}}
K(Q')|Q'|^m\mu(Q')
\rt)^p\rt)^{1/p}.
\]
Setting 
$K(Q'):=\ell_{Q'}^{\al-mn}$,
we have that
\begin{align*}
A_{Q}''
&\lesssim
\frac1{|Q|^{1/p}}
\lt(
\sum_{S\in\cS_{\mu}|_{Q}}
\lt(
\ell_{S}^{\al}
\frac{\mu(S)}{\mu(S)^{1/p'}}
\rt)^p
\rt)^{1/p}
\le
\frac{\ell_{Q}^{\al}}{|Q|^{1/p}}
\lt(
\sum_{S\in\cS_{\mu}|_{Q}}
\mu(S)
\rt)^{1/p}
\\ &\lesssim
\frac{\ell_{Q}^{\al}}{|Q|^{1/p}}
\lt(
\sum_{S\in\cS_{\mu}|_{Q}}
\mu(E_{\cS_{\mu}}(S))
\rt)^{1/p}
\le
\ell_{Q}^{\al}
\lt(\frac{\mu(Q)}{|Q|}\rt)^{1/p}
\le
\lt(
\frac{(M_{\al}[\mu\1_{Q}])^{p'}(Q)}{\mu(Q)}
\rt)^{1/p'},
\end{align*}
where in the last step 
we have used \eqref{1.3}.
\end{itemize}

Thus, 
Theorem \ref{thm1.3} 
holds from 
Theorem \ref{thm2.2}.

\subsection*{Ethics approval and consent to participate}
Not applicable.
\subsection*{Consent for publication}
Not applicable.
\subsection*{Competing Interests}
The authors have no competing interests to declare that are relevant to the content of this article.
\subsection*{Author contributions}
The authors contributed equally to the correctness of this paper.
\subsection*{Funding}
The first named author is financially supported by 
the Grant-in-Aid for JSPS Fellows (No. 25KJ0222).
The second named author is supported by 
Grant-in-Aid for Research Activity Start-up (25K23330)
The third named author is supported by 
Grant-in-Aid for Scientific Research (C) (23K03171), 
the Japan Society for the Promotion of Science. 
The forth named author is supported by 
Grant-in-Aid for Scientific Research (C) (15K04918, 19K03510 and 26K06821), 
the Japan Society for the Promotion of Science.


\begin{thebibliography}{999}

\bibitem{Ad} D.~R.~Adams, 
\emph{Traces of potentials arising from translation invariant operators},
Ann. Scuola Norm. Sup. Pisa Cl. Sci. (3) \textbf{25} (1971), 203--217.

\bibitem{CF}
F.~Chiarenza and M.~Frasca, 
\emph{Morrey spaces and Hardy-Littlewood maximal function}, 
Rend. Math. Appl., \textbf{7} (1987), 273--279.

\bibitem{GM1} L.~Grafakos and A.~Meskhi, 
\emph{Sharp Olsen's inequality for multilinear Riesz potentials}, 
Trans. A.~Razmadze Math. Inst. \textbf{175} (2021), no. 3, 433--436.

\bibitem{GM2} \bysame,
\emph{On sharp Olsen's and trace inequalities for multilinear fractional integrals}, 
Potential Anal. \textbf{59} (2023), no. 3, 1039--1050.

\bibitem{GT} D.~Gilbarg and S.~N.~Trudinger,
\emph{{\it Elliptic Partial Differential Equations of Second Order}},
2nd edn. (Springer, Berlin, 1983).

\bibitem{HHL}
T.~H\"{a}nninen, T.\,P.~Hyt\"{o}nen and K.~Li, 
\emph{Two-weight $L^p$-$L^q$ bounds for positive dyadic operators: 
unified approach to $p\le q$ and $p>q$},
Potential Anal., \textbf{45} (2016), no.~3, 579--608.

\bibitem{HKST}
N.~Hatano, R.~Kawasumi, H.~Saito and H.~Tanaka, 
\emph{The Adams trace theorem for product Morrey spaces},
to appear in J. Geom. Anal., arXiv:2604.17432 (math).

\bibitem{Hy} T.\,P.~Hyt\"{o}nen,
\emph{The $A_2$ theorem: Remarks and complements},
Harmonic analysis and partial differential equations, 91--106. 
American Mathematical Society, Providence, RI, 2014. 
arXiv:1212.3840 [math.CA].

\bibitem{ISST1} T.~Iida, E.~Sato, Y.~Sawano and H.~Tanaka, 
\emph{Weighted norm inequalities for multilinear fractional operators on Morrey spaces},
Studia Math. \textbf{205} (2011), no. 2, 139--170.

\bibitem{ISST2} \bysame,
\emph{Multilinear fractional integrals on Morrey spaces}, 
Acta Math. Sin. (Engl. Ser.) \textbf{28} (2012), no. 7, 1375--1384.

\bibitem{ISST3} \bysame,
\emph{Sharp bounds for multilinear fractional integral operators on Morrey type spaces}, 
Positivity \textbf{16} (2012), no. 2, 339--358.

\bibitem{KeSa} Kerman R. and Sawyer E., 
\emph{The trace inequality and eigenvalue estimates for Schr\"{o}dinger operators},
Ann. Inst. Fourier (Grenoble), \textbf{36} (1986), 207--228.

\bibitem{Moen09}
K.~Moen,
\emph{Weighted inequalities for multilinear fractional integral operators},
Collect. Math. \textbf{60} (2009), 213--238. 

\bibitem{ST} H.~Saito and H.~Tanaka,
\emph{{\it Introduction to Potential Theory: Maximal Operators and Weights}},
(Advances in Analysis and Geometry) 
de Gruyter , 2024, 
(ISBN: 3110725967).

\bibitem{SST}
Y.~Sawano, S.~Sugano and H.~Tanaka,
\emph{Generalized fractional integral operators and fractional maximal operators in the framework of Morrey spaces },
Trans. Amer. Math. Soc.  \textbf{363}~(2011), (12), 6481--6503.

\bibitem{Ta1} H.~Tanaka, 
\emph{A~characterization of two-weight trace inequalities for positive dyadic operators in the upper triangle case},
Potential Anal., \textbf{41}~(2014), 487--499. 

\bibitem{Ta2} \bysame,
\emph{The trilinear embedding theorem}, 
Studia Math., \textbf{227} (2015), no.~3, 238--249. 

\bibitem{Ta3} \bysame,
\emph{The $n$ linear embedding theorem},
Potential Anal., \textbf{44} (2016), no.~4, 793--809.

\end{thebibliography}
\end{document}